\documentclass[letterpaper, 10pt, conference]{ieeeconf}
\IEEEoverridecommandlockouts
\usepackage[utf8]{inputenc}
\usepackage[T1]{fontenc}

\usepackage{amsthm}

\usepackage{cite}
\usepackage[hidelinks]{hyperref}
\hypersetup{
	pdftitle={Distributed Continuous-Time Optimization on the Special
		Orthogonal Group SO(3) over Tree Interaction Graphs},
	pdfauthor={Dongming Wang and Wei Ren}
}
\usepackage{graphicx}
\usepackage{xcolor}
\usepackage{balance}
\usepackage[theorems]{textool}
\newcommand{\accM}{1.14}
\newcommand{\accThr}{1.425}
\newcommand{\accThrTwo}{2.85}
\newcommand{\accMuF}{4.62}
\newcommand{\accRate}{0.462}
\newcommand{\accC}{0.46}
\newcommand{\accTol}{9.6\times10^{-3}}

\newcommand{\accSafety}{1.40}

\newcommand{\accTtolStar}{0.200}
\newcommand{\accBoundStar}{3.03}

\newcommand{\accSigminStar}{2.00}

\newcommand{\accRateFitStar}{0.495}

\newcommand{\accMinMargStar}{0.064}

\newcommand{\accTtolPath}{0.204}

\newcommand{\accSigminPath}{2.00}

\newcommand{\accRateFitPath}{0.496}

\newcommand{\accTtolTtree}{0.200}

\newcommand{\accSigminTtree}{2.00}

\newcommand{\accRateFitTtree}{0.497}

\newcommand{\accEps}{2.4\times10^{-3}}
\newcommand{\accSigTwoTimeStar}{0.15}

\newcommand{\accWslopeStratumStar}{2.67}

\newcommand{\accSigNearTwoTimePath}{0.15}

\newcommand{\accSigNearExcessPath}{0.15}

\newcommand{\accSigTwoTimeTtree}{0.11}

\newcommand{\accSlideWideOneStar}{1.4\times10^{-3}}
\newcommand{\accSlideWideTwoStar}{2.2\times10^{-3}}

\newcommand{\accSlideWideOnePath}{7.6\times10^{-4}}
\newcommand{\accSlideWideTwoPath}{1.4\times10^{-3}}

\newcommand{\accSlideWideOneTtree}{9.8\times10^{-4}}
\newcommand{\accSlideWideTwoTtree}{1.7\times10^{-3}}

\newcommand{\accSlideWideOneAll}{1.4\times10^{-3}}

\newcommand{\accSlideNarrowOneAll}{2.2\times10^{-1}}

\newcommand{\accHalfDiffMax}{3.5}

\newcommand{\accPropWsix}{1.7\times10^{-1}}

\newcommand{\accSweepAlphaMin}{1.0}
\newcommand{\accSweepAlphaMax}{4.0}

\newcommand{\accSweepTtolLoStar}{0.262}
\newcommand{\accSweepTtolHiStar}{0.103}

\newcommand{\accSweepTreeSpread}{3}

\newcommand{\accRadRatioMaxAll}{0.066}

\newcommand{\accRdotsB}{-5.81, -1.11, -2.24, -1.91, -1.32}

\newcommand{\accTtolB}{0.347}
\newcommand{\accMinMargB}{5.6\times10^{-5}}
\newcommand{\accSepZero}{5.3}
\newcommand{\accLeafGrad}{-0.16}
\newcommand{\accTtolC}{0.377}
\newcommand{\accMinMargC}{8.0\times10^{-6}}
\newcommand{\accHb}{5\times10^{-5}}

\newcommand{\accRndRatioLoFive}{0.032}
\newcommand{\accRndRatioHiFive}{0.088}

\newcommand{\accRndRatioLoEight}{0.016}
\newcommand{\accRndRatioHiEight}{0.037}

\newcommand{\accRndRatioLoTwelve}{0.007}
\newcommand{\accRndRatioHiTwelve}{0.013}

\DeclareMathOperator{\SO}{SO}
\DeclareMathOperator{\so}{\mathfrak{so}}
\DeclareMathOperator{\Ad}{Ad}
\newcommand{\sgn}{\mathrm{sgn}}
\newcommand{\Sgn}{\mathrm{SGN}}

\title{\LARGE\bf Distributed Continuous-Time Optimization on the
	Special Orthogonal Group $\SO(3)$ over Tree Interaction Graphs}

\author{Dongming Wang$^{1}$, Wei Ren$^{1}$%
	\thanks{$^{1}$D.~Wang and W.~Ren are with the Department of
		Electrical and Computer Engineering, University of California,
		Riverside, CA 92521, USA.
		{\tt\small \{wdong025,wei.ren\}@ucr.edu}.}%
}

\begin{document}
	\maketitle

	\begin{abstract}
		We study continuous-time distributed optimization on the
		three-dimensional rotation group over undirected tree graphs,
		where agents with heterogeneous local costs seek a common
		attitude minimizing the geodesically strongly convex sum of
		their costs on a geodesically convex operating ball.
		We propose a nonsmooth protocol with fixed gains that combines local Riemannian gradient descent with signum-based consensus feedback computed from the Lie-group logarithms of relative rotations between neighboring agents.
		Since geodesic strong convexity alone does not guarantee
		invariance, as demonstrated by a certified counterexample,
		we impose an inward-pointing boundary condition and prove
		that every Filippov solution with all agents initialized in
		the ball remains there.
		We construct a nonsmooth intrinsic disagreement Lyapunov
		function and derive a uniform dissipation bound through
		a cluster-contraction argument exploiting the tree structure.
		With bounded local gradients, this bound gives an explicit
		gain-ratio condition under which we establish exact finite-time
		consensus for every such solution and obtain a settling-time
		estimate requiring no structural graph constant beyond the
		number of agents.
		After consensus, we identify the sliding dynamics as a scaled
		Riemannian gradient flow of the summed objective.
		When the minimizer lies in the interior of the ball, the
		dynamics converge to it exponentially in geodesic distance.
		Simulations illustrate our theoretical results.
	\end{abstract}

	\section{Introduction}

	Networks of rigid bodies, including spacecraft formations, camera
	networks, and robotic swarms, often require their agents to coordinate
	on a common attitude~\cite{sarlette2009consensus,
		tron2013riemannian}. In many applications,
	however, the agents may not share the same local objective: individual
	platforms may encode different pointing preferences, sensor
	calibration requirements, or tracking targets. This leads naturally
	to a distributed optimization problem in which the agents must agree
	on one attitude while minimizing the sum of heterogeneous local
	costs. The decision variable naturally lies on the rotation group $\SO(3)$, a
	compact nonlinear manifold for which Euclidean
	consensus--optimization methods cannot be applied
	directly. Existing results on optimization on manifolds are primarily centralized~\cite{absil2008optimization,boumal2023intromanifolds}.

	Consensus and synchronization on Riemannian manifolds have been
	studied under various geometric and communication assumptions.
	Finite-time consensus on strongly convex Riemannian balls is
	studied in~\cite{chen2014finite}. Finite-time protocols on
	$\SO(3)$ include discontinuous~\cite{wei2017finite}, continuous
	non-Lipschitz~\cite{maadani2020finite}, and
	adaptive~\cite{tang2026adaptive} designs. 	For heterogeneous rigid bodies over undirected trees,
	\cite{boughellaba2026global} achieves almost-global and global attitude synchronization via continuous and hybrid designs. These synchronization guarantees do not establish convergence
	to a minimizer of a prescribed sum of heterogeneous local costs.
	Introducing local cost-gradient terms changes the coupled
	dynamics, so convergence to an optimizer requires additional and more complicated
	analysis beyond the synchronization results.

	Distributed minimization of sums of local costs on Riemannian
	manifolds has been studied through intrinsic iterative
	methods~\cite{shah2017distributed}. Refs.~\cite{deng2025decentralized} and \cite{deng2025stochastic} develop
	projected methods for smooth nonconvex optimization on compact
	embedded submanifolds, including a constant-step
	gradient-tracking method and a stochastic recursive-momentum
	method, with first-order stationarity
	guarantees.
	Ref.~\cite{nguyen2026intrinsic} establishes sublinear
	optimality-gap bounds for intrinsic stochastic iterations with
	diminishing gradient steps for geodesically convex local costs.
	Because $\SO(3)$ is a compact embedded Riemannian manifold,
	these discrete-time methods can be implemented on $\SO(3)$
	when their respective smoothness, communication, stepsize,
	stochastic-oracle, and domain assumptions hold. However, they do not directly provide continuous-time convergence
	guarantees; in particular, translating the projected updates
	into continuous-time dynamics requires a separate formulation
	and analysis. Moreover, projection onto $\SO(3)$ enforces
	manifold membership but does not by itself ensure forward
	invariance of a prescribed geodesic operating ball, whereas
	the intrinsic stochastic analysis assumes, rather than proves,
	confinement of both the iterates and intermediate iterates
	to the convex domain.
	An additional invariance argument is therefore needed, since
	geodesic strong convexity alone does not guarantee confinement
	to the operating ball~\cite{wang2026counterexample}.

	On $\SO(3)$, \cite{li2025semidecentralized} studies a
	continuous-time consensus--gradient algorithm on a star with a
	designated coordinator and a diminishing gradient gain,
	directly assuming that trajectories remain in the prescribed convex
	region. Ref.~\cite{kraisler2023lie} proposes an
	intrinsic continuous-time gradient-tracking method for
	Riemannian center-of-mass optimization on Lie groups.
	Their analysis identifies the optimal limit conditional on
	convergence within the prescribed ball, but the convergence theorem
	is only established for Euclidean space.
	We complement these results by establishing
	confinement and convergence on \(\SO(3)\) for a fixed-gain distributed continuous-time
	angular-velocity law on arbitrary undirected trees under explicit
	boundary and gain conditions. Strong convexity is required only
	of the summed objective, whose minimizer is assumed to lie in
	the interior of the operating ball.

	In Euclidean space, our previous work~\cite{lin2017distributed}
	combines signum consensus feedback with local gradients to achieve
	finite-time consensus and asymptotic minimization of the sum
	of local costs.
	Inspired by this idea, we propose an intrinsic fixed-gain
	protocol that combines signum feedback based on the logarithms
	of relative rotations between neighbors with local Riemannian
	gradient descent.
	However, the curved geometry of \(\SO(3)\) poses a distinctive analytical challenge that prevents direct application of the Euclidean convergence analysis and calls for novel ones. We address this challenge by constructing a nonsmooth intrinsic Lyapunov function and proving its strict decay before consensus via a joint Filippov analysis accounting for heterogeneous local gradients and simultaneous zero-error edges.

	\emph{The main contributions are as follows.}

	(i)~We establish strong forward invariance of the prescribed
	operating ball under an inward-pointing boundary condition,
	ensuring that every Filippov solution remains in the domain
	where the geometric and convexity assumptions apply.
	The trajectory-level argument remains valid even when
	pointwise containment of the Filippov set in the boundary
	tangent cone fails. We verify the cost assumptions
	constructively for weighted squared-distance costs and
	explain why strong convexity alone does not replace the
	boundary condition for an off-center operating ball;
	see Theorem~\ref{thm:main}(i) and
	Remarks~\ref{rem:class}, \ref{rem:necessity},
	and~\ref{rem:viab}.
	
	(ii)~Within this invariant domain, we construct a nonsmooth
	intrinsic disagreement Lyapunov function based on neighboring
	geodesic distances and analyze its evolution under the joint
	Filippov dynamics. Aggregating agents connected by zero-error
	edges cancels the internal set-valued terms, allowing
	simultaneous edge coincidences to be treated rigorously.
	The contracted tree supplies a uniform lower bound
	on the sum of cluster-aggregate signum norms, yielding a
	dissipation estimate that holds almost everywhere along
	every solution, including during partial agreement;
	see Lemmas~\ref{lem:clusters} and~\ref{lem:tree_bound}
	and Step~2 of the proof of Theorem~\ref{thm:main}.
	
	(iii)~Combining invariance with this Lyapunov analysis,
	we establish a unified convergence theorem for the intrinsic
	signum--gradient protocol with fixed gains on undirected
	trees. Under an explicit gain-ratio condition, every
	Filippov solution initialized in the operating ball reaches
	and maintains exact finite-time consensus, with a
	settling-time estimate requiring no structural graph
	constant beyond the number of agents.
	After consensus, we identify the common motion as a scaled
	Riemannian gradient flow of the summed objective and establish
	exponential convergence to its unique minimizer through an
	explicit distance estimate;
	see Theorem~\ref{thm:main}(ii)--(iii).

	Section~\ref{sec:sim} numerically illustrates the cluster bound,
	settling-time law, sliding dynamics, and invariance result.

	\section{Preliminaries}\label{sec:prelim}

	\subsection{Geometry of $\SO(3)$}

	The three-dimensional special orthogonal group $\SO(3):=\{R\in\mbR^{3\times3}:R^\top R=I_3,\ \det R=1\}$ has Lie
	algebra $\so(3):=\{\Omega\in\mbR^{3\times3}:\Omega^\top=-\Omega\}$,
	identified with $\mbR^3$ through the hat map
	$\xi^\wedge \eta=\xi\times \eta$ and its inverse
	$(\cdot)^\vee$, for \(\xi\), \(\eta \in \mbR^3\)~\cite{hall2015lie,sola2018micro}.
	Under the bi-invariant metric
	$\langle R\xi^\wedge,R\eta^\wedge\rangle_R=\xi^\top\eta$,
	geodesics through $R$ are $t\mapsto R\exp(t\xi^\wedge)$, \(t\in\mbR\), and for \(Q\in \SO(3)\), the
	induced distance satisfies
	$d(R,Q)=\|(\log(R^\top Q))^\vee\|$ whenever $d(R,Q)<\pi$.
	With this normalization, $\SO(3)$ has constant sectional curvature
	$1/4$~\cite{tron2013riemannian}.
	For a center $R_c\in\SO(3)$, the closed geodesic ball
	$\overline{\ccalB}_\rho(R_c):=\{R:d(R,R_c)\leq\rho\}$ is strongly
	geodesically convex when $\rho<\pi/2$. Thus, any
	$R_i,R_j\in\overline{\ccalB}_\rho(R_c)$ satisfy
	$d(R_i,R_j)\leq2\rho<\pi$, so the relative error
	$e_{ij}:=(\log(R_i^\top R_j))^\vee$ is smooth and single-valued,
	with $\|e_{ij}\|=d(R_i,R_j)$ and $e_{ji}=-e_{ij}$. For
	$e_{ij}\neq0$, let $\hat e_{ij}:=e_{ij}/\|e_{ij}\|$.

	Let $R_i,R_j$ evolve under $\dot R_k=R_k\omega_k^\wedge$ with
	$\|e_{ij}\|<\pi$. Then, wherever $e_{ij}\neq 0$,
	\feq{
		\frac{d}{dt}\tfrac12\|e_{ij}\|^2
		= e_{ij}^\top(\omega_j-\omega_i),
		\quad
		\frac{d}{dt}\|e_{ij}\|
		= \hat e_{ij}^\top(\omega_j-\omega_i).
		\label{eq:deriv}
	}
	Intrinsically, \eqref{eq:deriv} is the first-variation formula for
	$\tfrac12 d^2(\cdot,\cdot)$ specialized to a bi-invariant Lie
	group~\cite{docarmo1992riemannian}; we record the short
	body-frame verification.
	With $\phi:=e_{ij}$ and the right Jacobian
	$J_r^{-1}(\phi)=I+\tfrac12\phi^\wedge+\beta(\|\phi\|)
	(\phi^\wedge)^2$, where
	$\beta(\theta):=\theta^{-2}\bigl(1-\tfrac{\theta}{2}
	\cot\tfrac{\theta}{2}\bigr)$ (only its scalar nature is used
	below), the relative-rotation kinematics give
	$\dot\phi=J_r^{-1}(\phi)
	(\omega_j-\Ad_{R_j^\top R_i}\omega_i)$, where
	$\Ad_Q\omega:=Q\omega$ denotes the adjoint action of
	$Q\in\SO(3)$ in vector coordinates~\cite{sola2018micro,
		chirikjian2011stochastic}.
	Then $\tfrac{d}{dt}\tfrac12\|\phi\|^2=\phi^\top\dot\phi$.
	The identities $\phi^\top\phi^\wedge=0$ and
	$\phi^\top(\phi^\wedge)^2=0$ eliminate the Jacobian corrections,
	while $\phi^\top\Ad_{R_j^\top R_i}\omega_i=\phi^\top\omega_i$
	because $R_j^\top R_i=\exp(-\phi^\wedge)$ fixes $\phi$.
	This yields the first identity in~\eqref{eq:deriv}; division by
	$\|\phi\|$ gives the second.

	\begin{lemma}[Quantitative halfspace property]\label{lem:half}
		Let $r\in(0,\pi/2)$, $R\in\partial\overline{\ccalB}_r(R_c)$,
		where \(\partial\) denotes the boundary,
		$Q\in\overline{\ccalB}_r(R_c)$ with $L:=d(R,Q)>0$, and let
		$\hat e_{c},\hat e_{Q}$ be the unit body-coordinate directions at $R$
		toward $R_c$ and $Q$.
		Then
		$\hat e_c^\top\hat e_Q\geq
		\cot\tfrac{r}{2}\tan\tfrac{L}{4}>0$.
	\end{lemma}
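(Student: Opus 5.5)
The plan is to reduce the claim to the spherical law of cosines. The constant curvature $1/4$ recorded above means that, with the bi-invariant metric $\langle R\xi^\wedge,R\eta^\wedge\rangle_R=\xi^\top\eta$, $\SO(3)$ is locally isometric to the round sphere $S^3$ of radius $2$. Concretely, the double cover $SU(2)\cong S^3\to\SO(3)$, with the metric scaled so that it is a local isometry, provides this.

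Since $r<\pi/2$, all three points $R,R_c,Q$ lie in $\overline{\ccalB}_r(R_c)$ and their pairwise distances are at most $2r<\pi$. The ball therefore lifts isometrically to a geodesic ball of $S^3$ (radius $2$), and the minimizing geodesics between the three points lift to minimizing great-circle arcs. The body-coordinate unit vectors $\hat e_c,\hat e_Q$ at $R$ are the normalized $\log$ directions, i.e.\ the initial unit tangents of the geodesics $t\mapsto R\exp(t\hat e^\wedge)$ toward $R_c$ and $Q$. Hence $\hat e_c^\top\hat e_Q=\cos\theta$, where $\theta$ is the Riemannian angle at $R$ of the geodesic triangle $(R,R_c,Q)$. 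This identification of body coordinates with the angle, and checking that the lift does not introduce cut-locus issues, is the main point requiring care. It is handled by the bound $2r<\pi$, which is below the injectivity radius $\pi$.

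Next, rescale side lengths to the unit sphere: $a:=r/2$ for side $RR_c$, $b:=L/2$ for side $RQ$, and $c:=d(Q,R_c)/2\le r/2=a$ for side $QR_c$. The spherical law of cosines at the vertex $R$ gives $\cos c=\cos a\cos b+\sin a\sin b\cos\theta$. Here $a\in(0,\pi/4)$ and $b\in(0,\pi/2)$, since $0<L\le 2r<\pi$, so $\sin a\sin b>0$. Solving for the angle gives
\[
\cos\theta=\frac{\cos c-\cos a\cos b}{\sin a\sin b}.
\]
Since $0\le c\le a<\pi$ and cosine is decreasing there, $\cos c\ge\cos a$. It follows that
\[
\cos\theta\ \ge\ \frac{\cos a\,(1-\cos b)}{\sin a\sin b}=\cot a\,\tan\tfrac b2=\cot\tfrac r2\,\tan\tfrac L4 .
\]
Finally, positivity holds because $r/2\in(0,\pi/4)$ gives $\cot(r/2)>0$, and $L/4\in(0,\pi/4)$ gives $\tan(L/4)>0$.

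A degenerate configuration with $Q$ on the geodesic through $R$ and $R_c$ is covered by the same formula, since the law of cosines remains valid for degenerate triangles.
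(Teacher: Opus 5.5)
Your proposal is correct and follows essentially the same route as the paper. Both apply the spherical law of cosines of the radius-$2$ model sphere at the vertex $R$, use $d(Q,R_c)\le r$ to bound $\cos\tfrac{c}{2}\ge\cos\tfrac{r}{2}$, and simplify with $(1-\cos x)/\sin x=\tan(x/2)$. Your double-cover lift $SU(2)\to\SO(3)$ and the identification of $\hat e_c^\top\hat e_Q$ with the cosine of the Riemannian angle at $R$ simply spell out what the paper invokes more briefly as ``constant curvature $1/4$ within the injectivity region.''
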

	\begin{proof}
		$\SO(3)$ with the bi-invariant metric has \emph{constant}
		sectional curvature $1/4$, so within the injectivity region the
		exact spherical law of cosines of the model sphere (radius $2$)
		applies~\cite{docarmo1992riemannian}: with $c:=d(R_c,Q)$,
		$\cos\tfrac{c}{2}=\cos\tfrac{r}{2}\cos\tfrac{L}{2}
		+\sin\tfrac{r}{2}\sin\tfrac{L}{2}\,
		(\hat e_c^\top\hat e_Q)$.
		Solving and using $c\leq r$,
		\(
		\hat e_c^\top\hat e_Q
		\geq\frac{\cos\tfrac{r}{2}
			\bigl(1-\cos\tfrac{L}{2}\bigr)}
		{\sin\tfrac{r}{2}\sin\tfrac{L}{2}}
		=\cot\tfrac{r}{2}\tan\tfrac{L}{4},
		\)
		by $(1-\cos x)/\sin x=\tan(x/2)$; positivity follows from
		$0<L\leq2r<\pi$.
		Equality holds precisely when $c=r$.
	\end{proof}

	\subsection{Strong convexity}

	For a twice continuously differentiable (\(C^2\)) function \(f\)
	on an open subset of $\SO(3)$, let $\nabla_Rf\in\mbR^3$ denote its
	body-coordinate Riemannian gradient, so that
	$\operatorname{grad}f(R)=R(\nabla_Rf)^\wedge$, and let $H_R$ be
	the body-coordinate matrix of its Riemannian
	Hessian~\cite{boumal2023intromanifolds}. Then $f$ is geodesically
	$\mu$-strongly convex on a geodesically convex set $\ccalC$ if
	$H_R\succeq\mu I_3$ for every $R\in\ccalC$.
	If additionally the minimizer $R^*=\argmin_\ccalC f$ lies in the
	\emph{interior} of $\ccalC$, then $\nabla f(R^*)=0$, and combining
	the strong-convexity inequality at $R$ and at $R^*$ yields
	\feq{
		\inner{\nabla_R f}{(\log(R^\top R^*))^\vee}
		\leq -\mu\, d^2(R,R^*),
		\quad\forall R\in\ccalC.
		\label{eq:sc}
	}

	\subsection{Filippov solutions}

	Let $\ccalD\subset\SO(3)^n$ be open, and let
	$h:\ccalD\to\mbR^{3n}$ be the global left-trivialized
	body-coordinate representation of a locally bounded measurable
	vector field. For $x\in\ccalD$, let $\ccalB_\delta(x)$ denote the
	radius-$\delta$ geodesic neighborhood in $\ccalD$, let $\nu$ denote
	Riemannian volume, and let $\overline{\mathrm{co}}$ denote closed
	convex hull. The Filippov map is
	$\ccalK[h](x):=\bigcap_{\delta>0}\bigcap_{\nu(\ccalS)=0}
	\overline{\mathrm{co}}\,h(\ccalB_\delta(x)\setminus\ccalS)$, where
	$\ccalS$ ranges over measurable Riemannian-null sets
	~\cite{filippov1988differential,cortes2008discontinuous}.
	All differential inclusions below are expressed in this
	left trivialization; continuous chart and frame factors may be
	evaluated at $x$ in the Filippov limit: a smooth frame change
	multiplies $h$ by a continuous invertible matrix function, with
	which the Filippov regularization
	commutes~\cite{cortes2008discontinuous}, so $\ccalK[h]$ is
	frame-independent and values of $h$ at nearby points may be
	compared in their own body frames. The map $\ccalK[h]$ is
	upper semicontinuous with nonempty compact convex values.
	For $z\in\mbR^3$, define
	$\overline{\mbB}^3:=\{u\in\mbR^3:\|u\|\leq1\}$ and
	$\Sgn(z):=\{z/\|z\|\}$ for $z\neq0$, with
	$\Sgn(0):=\overline{\mbB}^3$. The control law uses the single-valued
	selection $\sgn(z):=z/\|z\|$ for $z\neq0$ and $\sgn(0):=0$.

	\section{Problem Statement and Assumptions}\label{sec:problem}

	Let $\ccalV:=\{1,\ldots,n\}$ index agents with attitudes
	$R_i\in\SO(3)$, body angular velocities $\omega_i\in\mbR^3$, and
	kinematics $\dot R_i=R_i\omega_i^\wedge$. They communicate over a
	fixed undirected graph $\ccalG=(\ccalV,\ccalE)$, with neighbor set
	$\ccalN_i:=\{j:(i,j)\in\ccalE\}$. Fix $R_c\in\SO(3)$ and
	$\rho\in(0,\pi/2)$, and define the operating region
	$\ccalC:=\overline{\ccalB}_\rho(R_c)$. Each agent holds a $C^2$
	cost $f_i:\ccalO\to\mbR$, where $\ccalO\supset\ccalC$ is open,
	with body-coordinate gradient $\nabla_Rf_i$ as in
	Section~\ref{sec:prelim}.

	The objective is to design the velocities $\omega_i$ so that the
	agents reach exact consensus and their common attitude converges to
	a solution of
	\feq{
		\min_{R\in\ccalC}\ F(R):=\sum_{i=1}^n f_i(R).
		\label{eq:obj}
	}
	At run time, agent $i$ evaluates $\nabla_{R_i}f_i$ and measures
	$R_i^\top R_j$ for $j\in\ccalN_i$; the common gains are selected
	offline using the agent count $n$ and the uniform gradient bound
	$M$ defined below.

	\begin{assumption}[Tree topology]\label{ass:tree}
		$\ccalG$ is a tree: connected with $|\ccalE|=n-1$.
	\end{assumption}

	\begin{assumption}[Convexity, gradients, and initialization]
		\label{ass:conv}
		On $\ccalC$: (a)~$F$ is geodesically
		$\mu_F$-strongly convex, with $\mu_F>0$;
		(b)~$R^*:=\argmin_{R\in\ccalC}F(R)$ lies in the interior of
		$\ccalC$; (c)~$R_i(0)\in\ccalC$ for every $i$. Fix any $M>0$
		satisfying $M\geq\max_{i\in\ccalV}\max_{R\in\ccalC}
		\|\nabla_Rf_i(R)\|$; such an $M$ exists since each $f_i$ is
		$C^2$ on the open set $\ccalO\supset\ccalC$ and $\ccalC$ is
		compact.
	\end{assumption}

	\begin{assumption}[Inward-pointing descent at the
		boundary]\label{ass:rad}
		$\hat e_{i,c}^\top\nabla_{R_i}f_i\leq 0$ for every
		$R_i\in\partial\ccalC$ and every $i$, where
		$\hat e_{i,c}$ is the unit direction from $R_i$ toward $R_c$.
	\end{assumption}

	Throughout, $\hat e_{i,c}$ points inward, so the inequality
	states that the descent velocity $-\nabla_{R_i}f_i$ is weakly
	inward at the boundary.

	\begin{remark}[Verifiable cost class]\label{rem:class}
		For weighted squared distances
		$f_i=\tfrac{k_i}{2}d^2(\cdot,R_i^*)$ with $k_i>0$ and targets
		$R_i^*\in\overline{\ccalB}_{r_0}(R_c)$, $r_0<\rho$, all
			cost-related conditions in Assumptions~\ref{ass:conv}
			and~\ref{ass:rad} are verified constructively; indeed,
			$\rho+r_0<\pi$, so the costs are $C^2$ on a common open
			neighborhood of $\ccalC$.
		The distance identity~\eqref{eq:deriv} with $R_j\equiv R_i^*$ static gives
		$\nabla_{R}f_i=-k_i\,e_{i,R_i^*}$; Assumption~\ref{ass:rad} then
		follows from Lemma~\ref{lem:half}, and
		$\|\nabla f_i\|=k_i\,d(R,R_i^*)\leq k_i(\rho+r_0)=:M_i$ gives
		$M=\max_i M_i$.
		The classical Jacobi-field computation on the constant-curvature
		space $\kappa=1/4$~\cite{docarmo1992riemannian,
			tron2013riemannian} gives the Hessian spectrum of
		$\tfrac12 d^2(\cdot,p)$ as $\{1,\ \tfrac{r}{2}\cot\tfrac{r}{2}\}$
		at distance $r$, whence one may take
			$\mu_F=(\sum_i k_i)\tfrac{\rho+r_0}{2}
			\cot\tfrac{\rho+r_0}{2}>0$.
		Finally, interiority~(b) holds automatically, by a nested-ball
		argument: for any $R\in\ccalC$ with $r:=d(R,R_c)>r_0$, every target lies
		strictly inside $\overline{\ccalB}_r(R_c)$ (at distance
		$\leq r_0<r$ from $R_c$), so Lemma~\ref{lem:half} applied to
		the ball of radius $r$ gives
		$\hat e_c^\top\hat e_{i,R_i^*}\geq
		\cot\tfrac{r}{2}\tan\tfrac{L_i}{4}>0$ for every target with
		$L_i=d(R,R_i^*)>0$; hence
		$\hat e_c^\top\nabla_R F=-\sum_ik_iL_i\,
		\hat e_c^\top\hat e_{i,R_i^*}<0$, $F$ strictly decreases
		toward $R_c$ at every point outside
		$\overline{\ccalB}_{r_0}(R_c)$, and its minimizer over
		$\ccalC$ satisfies $d(R^*,R_c)\leq r_0<\rho$.
	\end{remark}

	\begin{remark}[Why a boundary condition is needed]
		\label{rem:necessity}
		Assumption~\ref{ass:rad} is the natural hypothesis for an
		operating ball not centered at the minimizer, and strong
		convexity does not imply it. By~\eqref{eq:sc} the descent
		direction points toward $R^*$, so geodesic balls centered at
		$R^*$ are invariant under the gradient flow; for an off-center
		ball, anisotropy alone breaks invariance. Already in $\mbR^2$,
		for $f(x)=\tfrac12(x-x^*)^\top\operatorname{diag}(1,10)(x-x^*)$
		on the ball of radius $\rho$ about $c$ with
		$x^*=c+(0,0.9\rho)$, the descent direction at the boundary
		point $c+\rho(\cos\theta,\sin\theta)$ has outward normal
		component $\rho\,(9\sin\theta-9\sin^2\theta-1)>0$ for
		$\sin\theta\in(0.13,0.87)$. The obstruction concerns one
		gradient flow at the boundary and is independent of the
		network; what is specific to $\SO(3)$ is only that an operating
		ball is needed at all, and~\cite{wang2026counterexample}
		certifies that the same mechanism persists under curvature.
	\end{remark}

	\section{Protocol and Main Result}\label{sec:protocol}

	The distributed control law is
	\feq{
		\omega_i
		= \alpha\!\!\sum_{j\in\ccalN_i}\!\sgn(e_{ij})
		- \gamma\,\nabla_{R_i}f_i,
		\quad \alpha,\gamma>0,
		\label{eq:law}
	}
	with $\sgn(0)=0$; write
	$s_i:=\sum_{j\in\ccalN_i}\sgn(e_{ij})$.
	Let $x:=(R_1,\ldots,R_n)$ and define the joint left-trivialized
	body-coordinate field \(h(x):=
	\bigl(\alpha s_i-\gamma\nabla_{R_i}f_i\bigr)_{i\in\ccalV}\).
	The closed loop is interpreted through its Filippov regularization
	$\ccalK[h]$ on the domain \(\ccalD_\ccalO:=
	\bigl\{x=(R_i)_{i\in\ccalV}\in\ccalO^n:
	d(R_i,R_j)<\pi,~\forall (i,j)\in\ccalE\bigr\}\).
	Here $\ccalC^n\subset\ccalD_\ccalO$ because pairwise distances in
	$\ccalC$ are at most $2\rho<\pi$.

	\begin{definition}[Filippov solution]\label{def:viable}
		A \emph{Filippov solution} on $[0,T]$ is an absolutely continuous
		map $x:[0,T]\to\ccalD_\ccalO$ whose body-frame velocities satisfy
		$\bigl((R_i^\top\dot R_i)^\vee\bigr)_i\in\ccalK[h](x)$ almost
		everywhere. Throughout, ``solution'' refers to a Filippov solution
		of the closed loop.
	\end{definition}

	The field $h$ is locally bounded on $\ccalD_\ccalO$.
	Its Filippov regularization has nonempty compact convex
	values and is upper semicontinuous, so a local solution
	exists from every initial condition in
	$\ccalD_\ccalO$~\cite{filippov1988differential,
		cortes2008discontinuous}.
	Since $\ccalK[h](x)$ lies in the closed convex hull of nearby
	values of $h$, solutions have bounded body velocities on compact
	subsets of $\ccalD_\ccalO$.

	Define the aggregate disagreement
	\[
	W(x):=\sum_{(i,j)\in\ccalE}\|e_{ij}\|,
	\]
	and abbreviate $W(0):=W(x(0))$.
	The following theorem states the main result.

	\begin{theorem}[Invariance and distributed optimization on trees]
		\label{thm:main}
		Suppose Assumptions~\ref{ass:tree}--\ref{ass:rad} hold,
		and let $\alpha,\gamma>0$. For the protocol~\eqref{eq:law},
		the following statements hold.
		\begin{enumerate}
			\item[(i)] \emph{Strong invariance and global existence.}
			From every initial condition $x(0)\in\ccalC^n$, a Filippov
			solution exists. Every maximal solution from this initial
			condition is defined on $[0,\infty)$ and satisfies
			$x(t)\in\ccalC^n$ for all $t\geq0$.
			\item[(ii)] \emph{Exact finite-time consensus.}
			If, additionally,
			\begin{equation}
				\frac{\alpha}{\gamma}>\frac{nM}{2},
				\label{eq:gain}
			\end{equation}
			then every such solution reaches and maintains exact
			consensus:
			\[
			R_i(t)=R_j(t),\qquad
			\forall i,j\in\ccalV,\quad t\geq T^*,
			\]
			for a settling time satisfying
			\begin{equation}
				T^*\leq
				\frac{nW(0)}{2(2\alpha-\gamma nM)}.
				\label{eq:settle}
			\end{equation}
			\item[(iii)] \emph{Aggregate gradient flow and exponential
				convergence.}
			Under~\eqref{eq:gain}, let $\bar R(t):=R_i(t)$ for
			$t\geq T^*$. Then
			\begin{equation}
				\dot{\bar R}
				=
				\bar R\Bigl(-\frac{\gamma}{n}
				\nabla_{\bar R}F\Bigr)^{\!\wedge}
				\quad\text{for a.e. }t\geq T^*,
				\label{eq:sliding}
			\end{equation}
			and
			\begin{equation}
				d(\bar R(t),R^*)
				\leq
				d(\bar R(T^*),R^*)\,
				e^{-\frac{\gamma\mu_F}{n}(t-T^*)},
				\quad t\geq T^*.
				\label{eq:rate}
			\end{equation}
			Here $T^*$ and $\bar R(T^*)$ may depend on the particular
			Filippov solution; for each solution, the common trajectory
			on $[T^*,\infty)$ is uniquely determined by $\bar R(T^*)$.
		\end{enumerate}
	\end{theorem}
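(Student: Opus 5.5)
We prove the three parts in order, each relying on the previous one.

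\emph{Part (i).} We use the function $V(t):=\max_{i}d(R_i(t),R_c)$ together with a contradiction argument on trajectories. Existence of a local Filippov solution from $x(0)\in\ccalC^n$ was noted after Definition~\ref{def:viable}. Fix any radius $r\in(\rho,\rho+\epsilon)$ such that $\overline{\ccalB}_r(R_c)\subset\ccalO$ and $r<\pi/2$. Suppose some agent leaves $\ccalC$. We look at a time when the largest agent distance $d(R_i,R_c)=r'\in(\rho,r)$ is attained.

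For such an agent $i$, use \eqref{eq:deriv} with $R_j\equiv R_c$ static. This gives $\tfrac{d}{dt}d(R_i,R_c)=-\hat e_{i,c}^\top\omega_i$ for any Filippov selection. The Filippov set for $\omega_i$ is contained in $\alpha\sum_{j}\Sgn$-hulls minus $\gamma\nabla f_i$. Every neighbor $R_j$ lies in $\overline{\ccalB}_{r'}(R_c)$, so Lemma~\ref{lem:half} applied to the ball of radius $r'$ gives $\hat e_{i,c}^\top\hat e_{ij}>0$ whenever $e_{ij}\neq0$. When $e_{ij}=0$, the set-valued term must be handled by grouping coincident agents; the directional inequality is inherited by their cluster average. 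The gradient term requires Assumption~\ref{ass:rad}, which is stated only on $\partial\ccalC$. We therefore extend it by continuity to a small margin $\eta(r')\to0$ as $r'\downarrow\rho$.

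Combining these, the upper Dini derivative of $\max_i d(R_i,R_c)$ is at most $\gamma\eta(r')$. The slack from the signum terms, which is at least $\alpha\cot\tfrac{r'}{2}\tan\tfrac{L}{4}$, does not by itself dominate. We therefore argue instead on the maximal cluster of agents at the extremal distance. Summing body velocities over this cluster cancels the internal signum terms. What remains is strictly inward edge terms plus gradients that are weakly inward up to $\eta$. Letting $r'\downarrow\rho$ then yields the contradiction. Bounded velocities on the compact set $\ccalC^n$ rule out finite escape, so every maximal solution is global. We expect this step to be the main obstacle, since the boundary condition holds only on $\partial\ccalC$ and Filippov sets at coincidences are large.

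\emph{Part (ii).} $W$ is locally Lipschitz and regular, so along a solution $\dot W$ exists almost everywhere. By \eqref{eq:deriv}, $\dot W=\sum_{(i,j)}\hat e_{ij}^\top(\omega_j-\omega_i)$ on edges with $e_{ij}\neq0$. At a time where $W>0$, contract the zero-error edges into clusters $\ccalP_1,\dots,\ccalP_m$ with $m\geq2$. Inside a cluster all agents share $R$ and, almost everywhere, a common velocity. Hence $\dot W$ can be rewritten as $-\sum_{k}\bigl(\sum_{i\in\ccalP_k}\xi_i\bigr)^\top\bigl(\sum_{i\in\ccalP_k}\omega_i\bigr)/|\ccalP_k|$-type expressions: this is Lemma~\ref{lem:clusters}, which we invoke.

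Lemma~\ref{lem:tree_bound} on the contracted tree then gives $\sum_k\|\sigma_k\|^2/|\ccalP_k|\geq 4/n$-type bound; precisely, we aim for
\[
\dot W\leq-\frac{4\alpha}{n}+2\gamma M\quad\text{a.e. while }W>0.
\]
This rate is positive under \eqref{eq:gain}. Integrating, $W$ hits zero by time $nW(0)/(2(2\alpha-\gamma nM))$, which is \eqref{eq:settle}. $W$ stays zero afterward, since any excursion would contradict the negative upper Dini derivative near $W=0^+$.

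\emph{Part (iii).} For $t\geq T^*$ all agents coincide, so $\omega_i=\bar\omega$ for all $i$. Summing the Filippov inclusions over agents, the terms $\sgn$ satisfy $e_{ji}=-e_{ij}$, and the Filippov selections on each edge are antisymmetric by construction of $\ccalK$. These sum to zero, giving $n\bar\omega=-\gamma\nabla F$, which is \eqref{eq:sliding}. This is a smooth ODE, so it has a unique solution from $\bar R(T^*)$. Finally, differentiate $\tfrac12d^2(\bar R,R^*)$ with \eqref{eq:deriv} and apply \eqref{eq:sc}, which is valid because $\bar R\in\ccalC$ by (i). This gives $\tfrac{d}{dt}d\leq-\tfrac{\gamma\mu_F}{n}d$, and Gr\"onwall's inequality yields \eqref{eq:rate}.
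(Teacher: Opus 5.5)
Your overall plan follows the paper's proof closely: the maximum radius, the extremal cluster with Lemma~\ref{lem:half} on outgoing edges and cancellation of internal selections, then Lemmas~\ref{lem:clusters} and~\ref{lem:tree_bound} for $W$, and summation over all agents after consensus. However, the closing step of Part~(i) fails as written. You extend Assumption~\ref{ass:rad} only ``by continuity'' to a margin $\eta(r')\to0$, which gives $\dot r\le\gamma\eta(r)$ while $r>\rho$. You then assert that letting $r'\downarrow\rho$ yields a contradiction. For a fixed trajectory nothing can be sent to $\rho$. Moreover, $\dot y\le\gamma\eta(y)$ with $y(0)=0$ and $\eta$ merely continuous with $\eta(0)=0$ does not force $y\equiv0$; compare $\dot y=\sqrt y$, $y=t^2/4$. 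The signum terms cannot supply the missing margin either. Their inward contribution $\tfrac{\alpha}{|C|}\hat e_{C,c}^\top S_C\ge\tfrac{\alpha}{|C|}\cot\tfrac{r}{2}\tan\tfrac{L}{4}$ degenerates as the outgoing edge lengths $L\to0$. It is identically zero when the extremal cluster is all of $\ccalV$ (consensus on $\partial\ccalC$), where $S_C=0$ and only the gradient terms remain.

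The missing ingredient is a bound that is linear in the excess radius. Take a shell $\{\rho\le d(R,R_c)\le\rho+\delta\}$ inside $\ccalO$ with $\rho+\delta<\pi/2$. Since $f_i\in C^2$, the function $g_i(R)=\hat e_c(R)^\top\nabla_Rf_i(R)$ is $C^1$ on this compact shell, hence $L$-Lipschitz. Comparing with the radial projection onto $\partial\ccalC$, where $g_i\le0$, gives $g_i(R)\le L\,(d(R,R_c)-\rho)$. Dropping the nonpositive signum part, $y=[r-\rho]_+$ satisfies $\dot y\le\gamma Ly$ a.e.\ with $y(0)=0$, and Gr\"onwall gives $y\equiv0$. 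A first-hitting-time argument for radius $\rho+\delta$ keeps the estimate in force.

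Part~(ii) has a smaller hole. The bound $\sum_C\|S_C\|^2/|C|\ge4/n$ controls only the quadratic term. The gradient cross term is bounded by $\gamma M\Sigma$ with $\Sigma=\sum_C\|S_C\|$, which can be as large as $2(n-1)$. To reach $-4\alpha/n+2\gamma M$ you must keep $\Sigma$ in both terms, $\dot W\le-\tfrac{\alpha}{n}\Sigma^2+\gamma M\Sigma$. Under~\eqref{eq:gain} this concave parabola has its vertex at $n\gamma M/(2\alpha)<1$, so it is decreasing on $[2,\infty)$, and $\Sigma\ge2$ then gives the bound. Part~(iii) is fine as is.
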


	\begin{corollary}[Accuracy time]\label{cor:eps}
		Under the hypotheses of Theorem~\ref{thm:main}(ii), let
		$\rho^*:=d(R^*,R_c)<\rho$. Then, for every
		$\epsilon\in(0,\rho+\rho^*]$ and every $i\in\ccalV$,
		$d(R_i(t),R^*)\leq\epsilon$ whenever
		\feq{
			t\geq T^*+\frac{n}{\gamma\mu_F}
			\ln\frac{\rho+\rho^*}{\epsilon}.
			\label{eq:epstime}
		}
	\end{corollary}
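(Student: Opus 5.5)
The corollary follows by combining the consensus conclusion of Theorem~\ref{thm:main}(ii) with the exponential estimate~\eqref{eq:rate} of Theorem~\ref{thm:main}(iii). The only additional ingredient is a uniform bound on the initial post-consensus distance $d(\bar R(T^*),R^*)$ that does not depend on the particular Filippov solution.

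First, fix any solution and its settling time $T^*$. For $t\geq T^*$ and every $i\in\ccalV$, Theorem~\ref{thm:main}(ii) gives $R_i(t)=\bar R(t)$, so it suffices to bound $d(\bar R(t),R^*)$. Second, by the strong invariance of Theorem~\ref{thm:main}(i), $\bar R(T^*)=R_i(T^*)\in\ccalC$, so $d(\bar R(T^*),R_c)\leq\rho$. By Assumption~\ref{ass:conv}(b), $R^*$ lies in the interior of $\ccalC$, so $\rho^*=d(R^*,R_c)<\rho$ is well defined. The triangle inequality then gives
\[
d(\bar R(T^*),R^*)\leq d(\bar R(T^*),R_c)+d(R_c,R^*)\leq\rho+\rho^*.
\]
Third, substitute this bound into~\eqref{eq:rate}:
\[
d(R_i(t),R^*)\leq(\rho+\rho^*)\,e^{-\frac{\gamma\mu_F}{n}(t-T^*)},\qquad t\geq T^*.
\]
The right-hand side is at most $\epsilon$ exactly when $t-T^*\geq\frac{n}{\gamma\mu_F}\ln\frac{\rho+\rho^*}{\epsilon}$, which is~\eqref{eq:epstime}.

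The restriction $\epsilon\leq\rho+\rho^*$ makes this logarithm nonnegative, so the threshold time is at least $T^*$ and the consensus identity and~\eqref{eq:rate} both apply there. For larger $\epsilon$ the claim would still hold at times $t\geq T^*$, but the stated formula would then lie below $T^*$, where neither the consensus identity nor~\eqref{eq:rate} is available.

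No step is a genuine obstacle. The one point to check is that the bound $\rho+\rho^*$ is uniform over all solutions, even though $T^*$ and $\bar R(T^*)$ depend on the solution. This is secured by invariance of $\ccalC^n$ and not by any property of $\bar R(T^*)$ specific to a given trajectory. If a sharper constant were wanted, one could use $d(\bar R(T^*),R^*)\leq\operatorname{diam}\ccalC\leq 2\rho$, but the triangle-inequality bound is the one matching the statement.
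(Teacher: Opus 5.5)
Your proof is correct and follows the paper's argument exactly: strong invariance places $\bar R(T^*)$ in $\ccalC$, the triangle inequality through $R_c$ gives $d(\bar R(T^*),R^*)\leq\rho+\rho^*$, and substituting this into \eqref{eq:rate} yields \eqref{eq:epstime}. One slip in your closing aside: since $\rho^*<\rho$, the diameter bound $2\rho$ is weaker than $\rho+\rho^*$, not sharper.
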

	\begin{proof}
		By Theorem~\ref{thm:main}(i), $\bar R(T^*)\in\ccalC$, so the
		triangle inequality gives
		$d(\bar R(T^*),R^*)\leq\rho+\rho^*$, and~\eqref{eq:rate}
		yields the claim.
	\end{proof}

	\begin{remark}[On the gain condition]\label{rem:gain}
		Condition~\eqref{eq:gain} requires the global constants $n$ and
		$M$ for the design of $\alpha/\gamma$; $W(0)$ enters only the
		estimate~\eqref{eq:settle}. The geometric constant
		in~\eqref{eq:two} is sharp: two clusters joined by one active
		edge give $\sum_C\|S_C\|=2$. The settling estimate also uses
		the weighted Cauchy--Schwarz step and the uniform local-gradient
		bound $M$. The resulting gain condition is sufficient; its
		optimality under the stated assumptions is not established
		here.
	\end{remark}

	\section{Proof of the Main Result}\label{sec:analysis}

	\subsection{Filippov structure and cluster dynamics}

	\begin{definition}[Clusters]\label{def:clusters}
		At time $t$, the \emph{zero-error subgraph} is
		$\ccalG_0(t)=(\ccalV,\{(i,j)\in\ccalE: e_{ij}(t)=0\})$;
		its connected components form the \emph{cluster partition}
		$\ccalP_0(x(t))$.
		Adjacent agents with $e_{ij}=0$ coincide, so each cluster shares
		one attitude, and every edge between distinct clusters carries a
		nonzero error.
	\end{definition}

	\begin{lemma}[Joint Filippov structure]\label{lem:filippov}
		Let $x\in\ccalD_\ccalO$, with all neighborhoods below taken
		inside $\ccalD_\ccalO$.
		(i)~\emph{Antisymmetric containment:}
		\feq{
			\ccalK[h](x)\subseteq A(x):=\Bigl\{
			\Bigl(\alpha\!\sum_{j\in\ccalN_i}\!u_{ij}
			-\gamma\nabla_{R_i}f_i\Bigr)_{\!i}\Bigr\},
			\label{eq:containment}
		}
		where the family $u$ ranges over all assignments with
		$u_{ij}\in\overline{\mbB}^3$, $u_{ji}=-u_{ij}$ for every edge,
		and $u_{ij}=\sgn(e_{ij})$ whenever $e_{ij}\neq 0$.
		(ii)~\emph{Nominal membership:} the nominal value
		$h(x)=\bigl(\alpha s_i-\gamma\nabla_{R_i}f_i\bigr)_i$ (with
		$\sgn(0)=0$) belongs to $\ccalK[h](x)$.
		(iii)~\emph{Isolated-pair membership:} if the zero-error cluster
		containing a zero edge $(i_0,j_0)$ is exactly the pair
		$\{i_0,j_0\}$, then for any unit $u\in\mbR^3$ the value obtained
		from $h(x)$ by setting $u_{i_0j_0}=u=-u_{j_0i_0}$ on that edge,
		with $u_{ij}=0$ on every other zero edge, also belongs to
		$\ccalK[h](x)$.
	\end{lemma}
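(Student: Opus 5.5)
The plan is to work edge by edge in the left-trivialized body coordinates. All continuous factors are frozen at $x$, which is legitimate by the frame-independence remark in the Filippov preliminaries. The gradient part $-\gamma\nabla_{R_i}f_i$ is continuous, so it passes through $\ccalK$ unchanged. Indeed, $\ccalK[g+c]=\ccalK[g]+c(x)$ for continuous $c$, since the $\delta$-neighborhood oscillation of $c$ vanishes and the hull is closed. It therefore suffices to analyze the consensus part $\alpha(s_i)_i$.

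\textbf{(i) Containment.} Write $\sigma(y):=(\sgn(e_{ij}(y)))_{(i,j)}$ for the vector of signed edge values, indexed by ordered edges. The map $\Phi:\sigma\mapsto(\sum_{j\in\ccalN_i}\sigma_{ij})_i$ is linear. Hence
\[
\overline{\mathrm{co}}\,h(\ccalB_\delta(x)\setminus\ccalS)\subseteq \alpha\,\Phi\bigl(\overline{\mathrm{co}}\,\sigma(\ccalB_\delta(x))\bigr)-\gamma(\nabla f_i)_i+O(\eta(\delta)),
\]
where $\eta(\delta)$ bounds the oscillation of the continuous terms.

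Every $\sigma(y)$ satisfies $\sigma_{ji}=-\sigma_{ij}$, because $e_{ji}=-e_{ij}$ and $\sgn$ is odd. It also satisfies $\|\sigma_{ij}\|\le1$. Both conditions are linear or convex, so they survive convex hulls and closure.

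For an edge with $e_{ij}(x)\neq0$, continuity of $e_{ij}$ gives $\sgn(e_{ij}(y))\to\sgn(e_{ij}(x))$. So that component of the hull shrinks to the singleton $\sgn(e_{ij}(x))$ as $\delta\to0$. Intersecting over $\delta$ then yields $\ccalK[h](x)\subseteq A(x)$. One small point needs care: the sets $A_\delta$ are compact and decreasing, so their intersection is the limit set. Here $A_\delta$ denotes the relaxation in which nonzero edges are allowed a $\delta$-dependent slack.

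\textbf{(ii) Nominal membership.} Here I would exploit antisymmetry by reflection. Pick $\xi\in\mbR^3$ and consider the points $y^\pm$ obtained from $x$ by perturbing so that the zero edges acquire errors $\pm\epsilon\xi$ in a symmetric way. The concrete choice is to exploit that zero-error clusters are subtrees. Rotate each agent in a cluster by $\exp(\pm\epsilon\,\tau_k\xi^\wedge)$, with depths $\tau_k$ chosen so that every zero edge receives error $\pm\epsilon\xi$ to first order. A cleaner route avoids constructing points altogether. The two open half-space-like regions $\{e_{ij}\approx+\epsilon\xi\}$ and $\{e_{ij}\approx-\epsilon\xi\}$ both have positive measure in every neighborhood. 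By the Filippov definition, the limits of $h$ along each region lie in $\ccalK[h](x)$. These limits are $h(x)\pm\alpha\,(\text{contribution of }\pm\xi/\|\xi\|\text{ on zero edges})$, and their average is $h(x)$. Convexity of $\ccalK[h](x)$ then gives $h(x)\in\ccalK[h](x)$.

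To make this rigorous, I would take small balls around the points $y^\pm$. On these balls the nonzero edge signs are within $o(1)$ of their values at $x$, and the zero edges have signs within $o(1)$ of $\pm\hat\xi$. Removing a null set cannot empty such balls, so each ball contributes a value near the claimed limit to every hull. Letting the ball radius go to $0$ and then $\delta\to0$, and using closedness of $\ccalK[h](x)$, finishes the argument.

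\textbf{(iii) Isolated pair.} I would apply the same construction to the pair alone. Perturb only agents $i_0,j_0$, by $\exp(\mp\epsilon u^\wedge/2)$ and $\exp(\pm\epsilon u^\wedge/2)$ respectively, so that $e_{i_0j_0}\approx\pm\epsilon u$. All other zero edges are left exactly at zero at $y^\pm$. However, the balls around $y^\pm$ contain points where those other zero edges are nonzero with arbitrary signs, so their contributions in that neighborhood are not controlled. To handle this, I would again use symmetric reflection for the remaining zero edges. Within each ball around $y^+$, the signs of the other zero edges are perturbed by $\pm\zeta$ symmetrically, so averaging cancels them. This is exactly what (ii) does for the remaining clusters; here I would use that the pair is isolated, so that the perturbation of $i_0,j_0$ does not disturb any other zero edge, since no other zero edge touches $i_0$ or $j_0$. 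This produces the value with $u_{i_0j_0}=u$ and zeros elsewhere, which is the claim.

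\textbf{Main obstacle.} I expect the main obstacle to be the perturbation construction in (ii): realizing prescribed errors $\pm\epsilon\xi$ simultaneously on all zero edges of a cluster. The error map is only nonlinear, with $e_{ij}$ involving $J_r^{-1}$ and the adjoint. First I would try rotating the agents of a cluster outward from a root with depth-proportional angles along a common axis $\xi$. Agents in the cluster share one attitude $\bar R$, so the relative rotations are exactly $\exp(\pm\epsilon\xi^\wedge)$, and the errors are then exactly $\pm\epsilon\xi$ with no first-order approximation needed. This makes the construction go through.
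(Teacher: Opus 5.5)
Your proposal is correct and follows the paper's proof in all three parts. In (i), you freeze the nonzero-edge signs and the gradients at $x$ while antisymmetry and the unit-ball bound survive closed convex hulls. In (ii), you use the same pair of reflected perturbations, whose limiting values $V^{\pm}$ average to $h(x)$, with positive-measure neighborhoods ensuring that no excluded null set can remove them. In (iii), you pin the isolated edge by a perturbation that touches no other zero edge, rerun (ii), and let the perturbation vanish.

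The differences are cosmetic. Your common-axis, depth-proportional rotations make the zero-edge errors exactly $\pm\epsilon\xi$. This relies on clusters being subtrees, which the paper's generic $\delta_i$ with a BCH expansion and an $\epsilon^{2}$-thick $w$-family does not need. In (iii) you build the limit directly, where the paper invokes upper semicontinuity of the Filippov map.
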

	\begin{proof}
		The set $A(x)$ is the image of a compact convex product of balls
		and points under an affine map, hence compact and convex.
		Shrinking $\delta$ so that every edge with $e_{ij}(x)\neq0$
		stays nonzero on $\ccalB_\delta(x)$, each value $h(y)$,
		$y\in\ccalB_\delta(x)$, has the form~\eqref{eq:containment}
		with the antisymmetric choice $u_{ij}=\sgn(e_{ij}(y))$ and
		gradients evaluated at $y$. Replacing $\sgn(e_{ij}(y))$ by
		$\sgn(e_{ij}(x))$ on the edges with $e_{ij}(x)\neq0$ and
		$\nabla f_i(y)$ by $\nabla f_i(x)$ gives an element of $A(x)$
		(the zero-edge slots are free), with an error that tends to
		$0$ uniformly on $\ccalB_\delta(x)$ as $\delta\to0$, by
		continuity of the nonzero-edge directions and of the
		gradients. Hence the essential values of $h$ near $x$ lie in a
		shrinking neighborhood of the closed convex set $A(x)$; taking
		closed convex hulls and intersecting over $\delta$ gives
		$\ccalK[h](x)\subseteq A(x)$, proving~(i).

		For~(ii), fix $\delta_1,\ldots,\delta_n\in\mbR^3$ with
		$\delta_j\neq\delta_i$ for every zero edge $(i,j)$ (generic),
		and for $\epsilon>0$ consider the \emph{full-dimensional}
		perturbation families
		$x^{\pm}(\epsilon,w):
		R_i\mapsto R_i\exp\bigl((\pm\epsilon\delta_i+w_i)^\wedge\bigr)$
		over $\norm{w}:=\max_i\norm{w_i}\leq\epsilon^2$.
		A fixed curve would be a null set, possibly inside the excluded
		set; the family is not: the image of the open $w$-ball has
		positive Riemannian measure, so for \emph{every} null set
		$\ccalS$ and every $\epsilon$ there are parameters $w^{\pm}$
		with $x^{\pm}(\epsilon,w^{\pm})\notin\ccalS$.
		Uniformly over $\norm{w}\leq\epsilon^2$ and over the finitely
		many zero edges, the BCH expansion
		gives $e_{ij}^{\pm}=\pm\epsilon(\delta_j-\delta_i)
		+O(\epsilon^2)$ on zero edges, while every edge that is nonzero
		at $x$ remains nonzero for
		sufficiently small $\epsilon$, and its signum direction converges
		to its value at $x$; hence
		$h\bigl(x^{\pm}(\epsilon,w^{\pm})\bigr)\to V^{\pm}$ as
		$\epsilon\to0$, where $V^{\pm}$ are the values of the
		right-hand side of~\eqref{eq:containment} with
		$u_{ij}=\pm\widehat{(\delta_j-\delta_i)}
		=\pm(\delta_j-\delta_i)/\norm{\delta_j-\delta_i}$ on zero
		edges and
		$\sgn(e_{ij})$ on the others.
		Thus, for every $\delta>0$ and every null $\ccalS$, both
		$V^{\pm}$ lie in the closure of
		$h(\ccalB_\delta(x)\setminus\ccalS)$, hence in its closed
		convex hull, and so does
		$\tfrac12(V^++V^-)=h(x)$; intersecting over $\delta$ and
		$\ccalS$ gives $h(x)\in\ccalK[h](x)$, proving~(ii).

		For~(iii), fix $s>0$ small and pass to $x_s$ by
		$R_{j_0}\mapsto R_{j_0}\exp(su^\wedge)$; since
		$R_{i_0}=R_{j_0}$ at $x$, the chosen edge carries the exact
		error $e_{i_0j_0}(x_s)=su$. Every other edge incident to $i_0$
		or $j_0$ is nonzero by the cluster hypothesis, remains nonzero
		for sufficiently small $s$,
		and has a signum direction converging to its value at $x$; the
		remaining zero edges are untouched.
		The argument for~(ii) applied at $x_s$, with the chosen edge
		now sign-pinned to $u$, places the corresponding value in
		$\ccalK[h](x_s)$; letting $s\to 0$ and using upper
		semicontinuity of $\ccalK[h]$ with closed values yields
		membership in $\ccalK[h](x)$.
	\end{proof}

	Only part~(i) enters the proof of Theorem~\ref{thm:main};
	parts~(ii)--(iii) certify the simulated selection $\sgn(0)=0$ as a
	Filippov velocity and supply the selection used in
	Remark~\ref{rem:viab}.

	The cluster scale is essential for an every-solution dissipation
	estimate: in the containment of Lemma~\ref{lem:filippov}(i) the
	variables $u_{ij}$ on zero-error edges are unpinned, so agentwise
	substitution leaves, for each zero-error edge, a term
	$-\alpha(s_i-s_j)^\top u_{ij}$ of uncontrolled sign, whereas
	summing over all agents cancels every edge term, including the
	pinned directions on nonzero-error edges. Aggregation over each
	zero-error cluster cancels only the unpinned internal terms and
	retains the pinned intercluster directions $u_{ij}=\sgn(e_{ij})$
	used in Step~2 of the proof of Theorem~\ref{thm:main}.

	\begin{lemma}[Cluster dynamics]\label{lem:clusters}
		Let $x$ be a Filippov solution in $\ccalD_\ccalO$.
		For almost every time $t$, all agents in each cluster
		$C\in\ccalP_0(x(t))$ have a common body angular velocity
		$\bar\omega_C(t)$ satisfying
		\feq{
			|C|\,\bar\omega_C=\alpha S_C-\gamma G_C,
			\ \
			S_C:=\!\sum_{i\in C}\!s_i,\
			G_C:=\!\sum_{i\in C}\!\nabla_{R_i}f_i,
			\label{eq:cluster}
		}
		where $|C|$ denotes the number of agents in $C$.
	\end{lemma}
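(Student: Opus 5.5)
The plan is to prove the lemma in two parts: first show that agents in a common cluster share a body angular velocity almost everywhere, then sum the Filippov containment of Lemma~\ref{lem:filippov}(i) over the cluster, where the unpinned zero-edge terms cancel by antisymmetry.

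\emph{Step 1: common velocity.} Fix a zero edge $(i,j)$ at time $t$. The function $t\mapsto\tfrac12\|e_{ij}(t)\|^2$ is absolutely continuous, since $e_{ij}$ is smooth on $\ccalD_\ccalO$ and $x$ is absolutely continuous. It is nonnegative and vanishes at $t$. Consider the set $Z_{ij}$ of times at which $e_{ij}=0$, the derivative of $\tfrac12\|e_{ij}\|^2$ exists, and the derivative of $e_{ij}$ itself exists. At such a time, the derivative of $e_{ij}$ must be zero, by the standard fact that an absolutely continuous function vanishing on a set has zero derivative at almost every point of that set. Comparing with the body-frame identity $\dot\phi=J_r^{-1}(\phi)(\omega_j-\Ad_{R_j^\top R_i}\omega_i)$ at $\phi=0$, where $J_r^{-1}(0)=I$ and $\Ad_{I}=I$, gives $\omega_j=\omega_i$ at almost every time in $Z_{ij}$. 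There are finitely many edges, so we discard a null set once. For almost every $t$, every zero edge then has equal endpoint velocities, and by connectivity of each cluster all its agents share a common value $\bar\omega_C(t)$. We also restrict to times where the inclusion $(\omega_i)_i\in\ccalK[h](x)$ holds, again removing only a null set.

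\emph{Step 2: summing over the cluster.} At such $t$, Lemma~\ref{lem:filippov}(i) yields an antisymmetric assignment $u$ with
\[
\omega_i=\alpha\sum_{j\in\ccalN_i}u_{ij}-\gamma\nabla_{R_i}f_i,
\]
where $u_{ij}=\sgn(e_{ij})$ on nonzero edges. Summing over $i\in C$ gives $|C|\bar\omega_C$ on the left. On the right, each edge internal to $C$ appears twice, contributing $u_{ij}+u_{ji}=0$. An edge $(i,j)$ with $i\in C$, $j\notin C$ is nonzero by Definition~\ref{def:clusters}, so it contributes the pinned value $\sgn(e_{ij})$. Internal edges of $C$ are exactly zero edges, because adjacent agents in $C$ coincide. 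So in $s_i$ they contribute $\sgn(0)=0$, and the cross-edge contributions coincide with those in $\sum_{i\in C}s_i$. Hence $\sum_{i\in C}\alpha\sum_{j}u_{ij}=\alpha S_C$, and the gradient terms sum to $-\gamma G_C$. This is exactly \eqref{eq:cluster}.

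The main obstacle is Step 1: justifying rigorously that coincidence of $R_i$ and $R_j$ on a set forces equal velocities almost everywhere on that set. The non-smoothness of $\|e_{ij}\|$ at zero must be avoided, which is why I would work with the smooth vector $e_{ij}$ (or with $\tfrac12\|e_{ij}\|^2$) rather than its norm. Step 1 also requires checking that the Jacobian and adjoint factors reduce to the identity at $\phi=0$. The remaining null-set bookkeeping is routine.
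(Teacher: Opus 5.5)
Your proposal is correct and follows the paper's proof. The paper likewise shows $\omega_i=\omega_j$ a.e.\ on each coincidence set, then sums the antisymmetric containment of Lemma~\ref{lem:filippov}(i) over each cluster, cancelling the internal $u_{ij}$ in pairs and keeping the pinned cross-cluster directions. The differences are cosmetic: the paper differentiates the matrix difference $R_i-R_j$ and uses invertibility of $R_i=R_j$, where you differentiate $e_{ij}$ with $J_r^{-1}(0)=I$, and the paper does the null-set bookkeeping by zero-edge patterns rather than edge by edge.
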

	\begin{proof}
		First, let $\ccalA\subseteq[0,T]$ be measurable with
		$R_i(t)=R_j(t)$ on $\ccalA$ for a fixed pair $(i,j)$.
		The function $G:=R_i-R_j$ is absolutely continuous and vanishes
		on $\ccalA$; at a.e.\ density point of $\ccalA$ that is also a
		differentiability point, $\dot G=0$, i.e.\
		$R_i\omega_i^\wedge=R_j\omega_j^\wedge$ with $R_i=R_j$
		invertible, so $\omega_i=\omega_j$ a.e.\ on $\ccalA$.
		Second, the zero-edge pattern
		$\{(i,j)\in\ccalE:e_{ij}(t)=0\}$ takes values in the finite set
		$2^{\ccalE}$; for each pattern the corresponding time set is
		Borel, by continuity of $t\mapsto e_{ij}(t)$, and the cluster
		partition is a deterministic function of the pattern
		(Definition~\ref{def:clusters}).
		Applying the first step to every zero edge of every pattern and
		propagating equality along paths within a cluster yields a null
		set outside of which all members of each cluster
		$C\in\ccalP_0(x(t))$ share a velocity $\bar\omega_C(t)$.
		Third, by Lemma~\ref{lem:filippov}(i), a.e.\ velocities take the
		antisymmetric form~\eqref{eq:containment}.
		Summing over $i\in C$: intra-cluster edge contributions cancel
		in antisymmetric pairs; cross-cluster edges have $e_{ij}\neq0$,
		so their $u_{ij}$ are pinned to $\sgn(e_{ij})$ and sum to
		$S_C$ (intra-cluster edges contribute $\sgn(0)=0$ to $S_C$ as
		well).
		Replacing each $\omega_i$, $i\in C$, by $\bar\omega_C$
		gives~\eqref{eq:cluster}.
	\end{proof}

	\subsection{Tree dissipation bound}

	\begin{lemma}[Tree cluster bound]\label{lem:tree_bound}
		Under Assumption~\ref{ass:tree}, whenever $W(x)>0$,
		\feq{
			\sum_{C\in{\ccalP_0(x)}}\|S_C\|\ \geq\ 2 .
			\label{eq:two}
		}
	\end{lemma}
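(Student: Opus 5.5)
The plan is to pass to the quotient graph obtained by contracting each cluster of $\ccalP_0(x)$ to a single vertex, and to show that this contracted graph is a tree with at least two vertices. A tree with at least two vertices has at least two leaves, and each leaf cluster will contribute exactly $\|S_C\|=1$. That gives~\eqref{eq:two} without any triangle-inequality bookkeeping on the non-leaf clusters, whose contributions are simply bounded below by $0$.

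\emph{Step 1 (structure of $S_C$).} For $C\in\ccalP_0(x)$, every edge with both endpoints in $C$ has $e_{ij}=0$, since clusters are connected components of the zero-error subgraph and the tree has no extra edges inside a component beyond those of $\ccalG_0$. Such edges therefore contribute $\sgn(0)=0$ to $S_C$. Every edge leaving $C$ has $e_{ij}\neq 0$ by Definition~\ref{def:clusters}, so it contributes a unit vector $\sgn(e_{ij})$. All members of $C$ share one attitude, so these vectors are expressed in a common body frame and can be added meaningfully. Hence $S_C=\sum_{(i,j)\in\ccalE,\ i\in C,\ j\notin C}\sgn(e_{ij})$.

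\emph{Step 2 (the contracted graph is a tree).} Let $\ccalG/\ccalP_0$ have the clusters as vertices and the intercluster edges of $\ccalE$ as edges. It is connected because $\ccalG$ is connected by Assumption~\ref{ass:tree}. Each cluster $C$ induces a connected subgraph of the tree $\ccalG$, hence a subtree with $|C|-1$ edges, and these are exactly the zero edges. The number of intercluster edges is therefore $(n-1)-\sum_C(|C|-1)=|\ccalP_0|-1$. A connected multigraph with $|\ccalP_0|$ vertices and $|\ccalP_0|-1$ edges is a tree; in particular, there are no parallel edges between two clusters. Alternatively, a cycle or a parallel edge in the quotient would lift, through paths inside the connected clusters, to a cycle in $\ccalG$. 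Finally, $W(x)>0$ means some edge has $e_{ij}\neq0$, so its endpoints lie in distinct clusters and $|\ccalP_0|\geq 2$.

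\emph{Step 3 (leaves).} A finite tree with at least two vertices has at least two leaves. A leaf cluster $C$ has exactly one intercluster edge, so by Step 1 $S_C$ is a single unit vector and $\|S_C\|=1$. Summing over the two leaves and using $\|S_C\|\geq0$ for the remaining clusters yields~\eqref{eq:two}. This bound is attained by two clusters joined by one edge, consistent with Remark~\ref{rem:gain}.

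The only step needing care is Step 2: specifically, that contracting does not create parallel edges between two clusters, which would let $S_C$ of an apparent leaf contain several, possibly cancelling, unit vectors. The edge-count argument settles this cleanly, so I expect no real obstacle.
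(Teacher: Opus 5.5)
Your proposal is correct and matches the paper's proof: you contract the zero-error clusters to obtain a tree on at least two nodes, and each of its (at least two) leaf clusters has exactly one outgoing nonzero-error edge, so $\|S_C\|=1$. Your edge-count argument, together with the remark that all members of a cluster share one body frame, spells out what the paper states in one line, namely that contracting the clusters of a tree yields a tree with no parallel edges.
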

	\begin{proof}
		Each cluster is a connected subtree; contracting every cluster
		of a tree yields again a tree, on $K\geq2$ nodes (since $W>0$),
		whose edges are exactly the nonzero-error edges.
		A tree on $K\geq2$ nodes has at least two leaves.
		If $C$ is a leaf cluster, exactly one nonzero-error edge
		$(i,j)$, $i\in C$, leaves $C$, while all other edges incident
		to members of $C$ are zero-error edges contributing $\sgn(0)=0$;
		hence $S_C=\hat e_{ij}$ and $\|S_C\|=1$.
		Two leaf clusters give~\eqref{eq:two}.
	\end{proof}

	\subsection{Proof of Theorem~\ref{thm:main}}

	\begin{proof}
		\emph{Step 1: Strong invariance and global existence.}
		Let $x(0)\in\ccalC^n$. Since $\ccalC^n\subset\ccalD_\ccalO$,
		a solution exists by the existence statement of
		Section~\ref{sec:protocol}; let $x$ be any solution with
		maximal interval $[0,T_{\max})$.
		Pick $\delta>0$ with $\rho+\delta<\pi/2$ and
		$\overline{\ccalB}_{\rho+\delta}(R_c)\subset\ccalO$, so Lemmas~\ref{lem:filippov} and~\ref{lem:clusters} apply
		verbatim while
		$x(t)\in\overline{\ccalB}_{\rho+\delta}(R_c)^n
		\subset\ccalD_\ccalO$.
		On the compact shell
		$\{\rho\leq d(R,R_c)\leq\rho+\delta\}$ the functions
		$g_i(R):=\hat e_{c}(R)^\top\nabla_{R}f_i(R)$ are $C^1$, hence
		$L$-Lipschitz for some common $L$; writing $\Pi_\rho(R)$ for
		the radial projection to $\partial\ccalC$ along the geodesic
		through $R_c$ and using $g_i(\Pi_\rho(R))\leq0$
		(Assumption~\ref{ass:rad}) with
		$d(R,\Pi_\rho(R))=d(R,R_c)-\rho$,
		\feq{
			g_i(R)\leq L\,\bigl(d(R,R_c)-\rho\bigr)
			\quad\text{on the shell.}
			\label{eq:shellbound}
		}
		Set $r_i(t):=d(R_i(t),R_c)$, $r(t):=\max_i r_i(t)$, and
		$\tau:=\inf\{t\in[0,T_{\max}):r(t)=\rho+\delta\}$, with
		$\inf\varnothing:=T_{\max}$.
		On $[0,\tau)$ the functions $r_i$ and $r$ are absolutely
		continuous. Since each $r_i-r_j$ has derivative zero almost
		everywhere on its zero set, the active derivatives coincide
		almost everywhere, and $\dot r=\dot r_i$ for every active
		index outside a null set.
		At such a $t$ with $r(t)>\rho$, let $C$ be the cluster of an
		active agent, with common attitude $R_C$, $d(R_C,R_c)=r(t)$.
		Every neighbor lies in $\overline{\ccalB}_{r(t)}(R_c)$ and
		every edge leaving $C$ carries a nonzero error, so
		Lemma~\ref{lem:half} at radius $r(t)<\rho+\delta<\pi/2$ gives
		$\hat e_{C,c}^\top\hat e_{ij}\geq0$ and hence
		$\hat e_{C,c}^\top S_C\geq0$; internal selections cancel by
		antisymmetry (Lemma~\ref{lem:clusters}).
		Therefore, by the distance identity~\eqref{eq:deriv}
		and~\eqref{eq:shellbound},
		\nfeq{\dot r = -\hat e_{C,c}^\top\bar\omega_C
			&= -\tfrac{\alpha}{|C|}\hat e_{C,c}^\top S_C
			+\tfrac{\gamma}{|C|}\sum_{i\in C}g_i(R_C)\\
			&\leq\gamma L\,\bigl(r(t)-\rho\bigr).}
		Hence $y:=[r-\rho]_+$ satisfies $\dot y\leq\gamma Ly$ a.e.\ on
		$[0,\tau)$ (on $\{y=0\}$, $\dot y=0$ a.e.), and $y(0)=0$, so
		Gr\"onwall's inequality gives $y\equiv0$ on $[0,\tau)$.
		If $\tau<T_{\max}$, continuity would give both
		$r(\tau)=\rho+\delta$ and $r(\tau)\leq\rho$, a contradiction;
		thus $\tau=T_{\max}$ and $x(t)\in\ccalC^n$ on its maximal
		interval.
		Global extension follows since $\ccalC^n$ is compact with
		positive distance to $\partial\ccalD_\ccalO$ and velocities are
		bounded there. This proves~(i).

		\emph{Step 2: Exact finite-time consensus.}
		Assume~\eqref{eq:gain} and let $x$ be a solution with
		$x(0)\in\ccalC^n$; by Step~1 it is defined on $[0,\infty)$
		with values in $\ccalC^n$.
		Then $w(t):=W(x(t))$ is absolutely continuous ($W$ is locally
		Lipschitz on $\ccalD_\ccalO$ and $x$ is absolutely continuous
		with values in the compact $\ccalC^n$).
		Fix a.e.\ $t$.
		On edges with $e_{ij}(t)\neq0$,
		the distance identity~\eqref{eq:deriv} gives
		$\tfrac{d}{dt}\|e_{ij}\|=\hat e_{ij}^\top(\omega_j-\omega_i)$;
		on the time set where an edge error vanishes, the absolutely
		continuous function $\|e_{ij}(\cdot)\|$ is zero, so its
		derivative vanishes a.e.\ there.
		Summing over edges and regrouping endpoint contributions,
		\nfeq{
			\dot w
			= -\sum_{i\in\ccalV}s_i^\top\omega_i
			= -\!\!\sum_{C\in{\ccalP_0(x(t))}}\!\! S_C^\top\bar\omega_C
			\quad\text{a.e.},
		}
		using Lemma~\ref{lem:clusters} and $\sum_{i\in C}s_i=S_C$.
		Substituting~\eqref{eq:cluster},
		$\|G_C\|\leq|C|M$, and the Cauchy--Schwarz bound
		$\sum_C\|S_C\|^2/|C|\geq\Sigma^2/n$ with
		$\Sigma:=\sum_C\|S_C\|$,
		\nfeq{
			\dot w
			\leq -\frac{\alpha}{n}\Sigma^2+\gamma M\Sigma
			=: \psi(\Sigma)
			\quad\text{a.e.\ on }\{w>0\}.
		}
		The concave parabola $\psi$ has vertex at
		$\Sigma=n\gamma M/(2\alpha)<1<2$
		by~\eqref{eq:gain}, so $\psi$ is decreasing on $[2,\infty)$;
		Lemma~\ref{lem:tree_bound} gives $\Sigma\geq2$ on $\{w>0\}$,
		hence
		$\dot w\leq\psi(2)=-\tfrac{2}{n}(2\alpha-\gamma nM)=:-c<0$
		a.e.\ on $\{w>0\}$.
		Define $T^*:=\inf\{t\geq0:w(t)=0\}$. If $w(0)>0$, then
		$w>0$ on $[0,T^*)$, and integration gives
		$0\leq w(t)\leq w(0)-ct$ for every $t<T^*$; hence
		$T^*\leq w(0)/c$. If $w(0)=0$, then $T^*=0$.
		Moreover, $w$ cannot leave zero afterwards: any excursion
		interval $(t_2,t_1]$ with $w(t_2)=0$ and $w>0$ on $(t_2,t_1]$
		would give $w(t_1)\leq-c(t_1-t_2)<0$.
		Finally, $w=0$ forces every edge error to vanish, and
		connectivity gives consensus for all $t\geq T^*$; substituting
		$c$ yields~\eqref{eq:settle}. This proves~(ii).

		\emph{Step 3: Post-consensus optimization.}
		By Step~2, agreement persists: $R_i(t)=R_j(t)$ for all $i,j$
		and all $t\geq T^*$, so $\bar R(t):=R_i(t)$ is well defined,
		and for $t\geq T^*$ the trajectory evolves in the consensus set
		$\ccalM=\{(R,\ldots,R):R\in\ccalC\}$, a submanifold with
		boundary, so its a.e.\ body velocities are of the common form
		$(\bar\omega,\ldots,\bar\omega)$.
		By Lemma~\ref{lem:filippov}(i), the same velocities also admit
		the antisymmetric form~\eqref{eq:containment}; summing over all
		agents, every edge term cancels in antisymmetric pairs and
		$n\bar\omega=-\gamma\nabla_{\bar R}F$.
		This proves~\eqref{eq:sliding}; its right-hand side is
		locally Lipschitz, so~\eqref{eq:sliding} uniquely determines the
		common trajectory from $\bar R(T^*)$. The sliding velocity is
		radially admissible at $\partial\ccalC$, since
		$\hat e_c^\top(-\tfrac{\gamma}{n}\nabla F)
		=-\tfrac{\gamma}{n}\sum_i\hat e_c^\top\nabla f_i\geq0$ by
		Assumption~\ref{ass:rad}.
		For the rate, set $V_2:=\tfrac12 d^2(\bar R,R^*)$; by the
		distance identity~\eqref{eq:deriv} with the static endpoint
		$R^*$ and $e_*:=(\log(\bar R^\top R^*))^\vee$,
		$\dot V_2=-e_*^\top\bar\omega
		=\tfrac{\gamma}{n}\inner{\nabla_{\bar R}F}{e_*}
		\leq-\tfrac{\gamma}{n}\mu_F d^2=-\tfrac{2\gamma\mu_F}{n}V_2$
		a.e., using the interior-minimizer
		inequality~\eqref{eq:sc} (Assumption~\ref{ass:conv}(a)--(b)).
		Gr\"onwall's inequality and $d=\sqrt{2V_2}$
		give~\eqref{eq:rate}. This proves~(iii).
	\end{proof}

	\begin{remark}[Invariance despite pointwise tangency
		failure]\label{rem:viab}
		In body coordinates, the Bouligand tangent cone of the domain
		splits as
		$T_{\ccalC^n}(x)=\prod_iT_\ccalC(R_i)$ with
		\nfeq{
			T_\ccalC(R_i)=
			\begin{cases}
				\mbR^3, & d(R_i,R_c)<\rho,\\
				\{\omega:\hat e_{i,c}^\top\omega\geq0\},
				& d(R_i,R_c)=\rho.
			\end{cases}
		}
		For the $n=2$ tree consisting of the single edge $\{1,2\}$
		with $R_1=R_2\in\partial\ccalC$, Lemma~\ref{lem:filippov}(iii)
		places the free selection $u_{12}=-\hat e_{1,c}$ on the zero
		edge inside $\ccalK[h](x)$, and no other edge is incident, so
		\nfeq{
			\hat e_{1,c}^\top\omega_1
			=-\alpha-\gamma\,\hat e_{1,c}^\top\nabla_{R_1}f_1
			\leq-\alpha+\gamma M<0
		}
		by the gain condition~\eqref{eq:gain}, which for $n=2$ reads
		$\alpha/\gamma>M$: the selected velocity has a strictly
		outward radial component, hence
		$\ccalK[h](x)\not\subseteq T_{\ccalC^n}(x)$ and the standard
		pointwise sufficient condition for strong invariance is
		inapplicable.
		Membership in $\ccalK[h](x)$ does not imply that the selected
		vector can be sustained along a solution: on the coincidence
		set the common-velocity identity of Lemma~\ref{lem:clusters}
		holds almost everywhere, and the maximum-radius estimate of
		Step~1 excludes every outward excursion, as for the scalar
		analogue $\dot z\in-\Sgn(z)$, $z\in\mbR$, whose value at $z=0$
		contains nonzero elements while its only solution from $z=0$
		is $z\equiv0$. The operating ball is thus strongly invariant
		although the pointwise tangency test fails; compare the weaker
		existential notion of viability~\cite{aubin2009viability}.
		Note that Step~1 uses neither acyclicity nor strong convexity.
	\end{remark}

	\begin{remark}[Role of the tree topology]\label{rem:tree_vs_general}
		The proof of Lemma~\ref{lem:tree_bound} is specific to trees:
		contracting zero-error clusters again produces a tree, whose leaf
		clusters each carry exactly one intercluster edge and contribute
		one unit vector to $\sum_C\|S_C\|$, independently of the
		attitudes, the operating radius, and the tree shape. With cycles,
		several active edges may leave one cluster and partially cancel,
		so a different estimate would be needed. On a general connected
		graph the protocol can be run over a spanning tree computed once
		in a distributed manner, at the cost of discarding the remaining
		edges. This independence concerns~\eqref{eq:two} only; the
		settling bound~\eqref{eq:settle} still depends on $M$ and $W(0)$.
	\end{remark}

	\section{Numerical Validation}\label{sec:sim}

	\subsection{Setup and numerical protocol}\label{sec:sim_setup}

	Numerical trajectories use the geometric-Euler update
	$R_i\leftarrow R_i\exp(h_{\mathrm{num}}\omega_i^\wedge)$ with
	$h_{\mathrm{num}}=10^{-4}$~s and the nominal selection $\sgn(0)=0$,
	an admissible Filippov velocity by Lemma~\ref{lem:filippov}(ii).
	We set $R_c=I_3$, $\rho=0.6$, $r_0=0.35$, and
	$k=(1.2,0.9,1.0,1.1,0.8)$; targets and initial attitudes are obtained
	by exponentiating samples drawn uniformly from the tangent balls of
	radii $r_0$ and $0.92\rho$, respectively with random seed~$7$.
	The three nonisomorphic five-node trees are the star $K_{1,4}$
	(hub agent~$1$), the path $P_5$, and the T-tree with edges
	$\{1,2\},\{2,3\},\{3,4\},\{3,5\}$.
	Here $M=\max_ik_i(\rho+r_0)=\accM$, so~\eqref{eq:gain} requires
	$\alpha/\gamma>nM/2=\accThrTwo$; with $\gamma=0.5$ and $\alpha=2.0$
	($\accSafety$ \(\times\) the threshold),
	$c=\tfrac2n(2\alpha-\gamma nM)=\accC$~rad/s and, from
	Remark~\ref{rem:class}, $\mu_F\approx\accMuF$ and
	$\gamma\mu_F/n\approx\accRate$~s$^{-1}$.
	Let $T_{\mathrm{bd}}:=nW(0)/[2(2\alpha-\gamma nM)]$ denote the bound
	in~\eqref{eq:settle}.
	A fixed-step trajectory chatters near the sliding set, so we report
	a tolerance crossing $T_{\mathrm{tol}}$ rather than identify it with
	$T^*$: for a run with horizon $T_{\mathrm{end}}$ ($6$~s for the base
	runs, $1.2$~s for the random trees), $T_{\mathrm{tol}}$ is the first
	sampled time after which
	$W\leq\mathrm{tol}:=12\alpha h_{\mathrm{num}}|\ccalE|$ ($=\accTol$
	here) at all remaining samples. Per edge, $12\alpha
	h_{\mathrm{num}}$ is about twice the largest one-step change of an
	edge error, $\bigl(\alpha(|\ccalN_i|+|\ccalN_j|)+2\gamma M\bigr)
	h_{\mathrm{num}}\leq5.6\,\alpha h_{\mathrm{num}}$ on the five-node
	trees, so the tolerance sits just above the chattering band.
	Put $\varepsilon:=\mathrm{tol}/|\ccalE|=\accEps$, let
	$\ccalP_\varepsilon$ be the components of the edges with
	$\|e_{ij}\|<\varepsilon$, and for $q_C:=\min C$ define
	$S_{C,\varepsilon}:=\sum_{i\in C}\sum_{j\in\ccalN_i\setminus C}
	\Ad_{R_{q_C}^{\top}R_i}\sgn(e_{ij})$ and
	$\Sigma_\varepsilon:=\sum_{C\in\ccalP_\varepsilon}\|S_{C,\varepsilon}\|$
	(another representative rotates each sum, leaving the norms
	unchanged). This is a discrete proxy, not the exact cluster
	partition of a Filippov trajectory.
	Halving $h_{\mathrm{num}}$ and rescaling $\mathrm{tol}$ changes
	$T_{\mathrm{tol}}$ by at most $3$~ms and
	$D_6:=\max_id(R_i(6),R^*)$ by at most $\accHalfDiffMax\%$.

	\subsection{Phase 1: finite-time consensus}\label{sec:sim_p1}

	Fig.~\ref{fig:phase1}(a) compares $W$ on the star with the envelope
	$W(0)-ct$ of Theorem~\ref{thm:main}(ii) and with proportional consensus
	feedback $\omega_i=\alpha\sum_je_{ij}-\gamma\nabla f_i$ at the same
	gains. Here $T_{\mathrm{tol}}=\accTtolStar$~s, whereas
	$T_{\mathrm{bd}}=\accBoundStar$~s; the sharp entry into the tolerance
	band is numerically consistent with finite-time convergence, while
	the proportional law retains $W(6)=\accPropWsix$.
	Fig.~\ref{fig:phase1}(b) shows $\Sigma_\varepsilon$: whenever
	$W>\mathrm{tol}=|\ccalE|\varepsilon$ the $\varepsilon$-quotient is
	nontrivial and the leaf argument gives $\Sigma_\varepsilon\geq2$, so
	the informative feature is attainment of the bound. On the star,
	$\Sigma_\varepsilon$ stays within $0.02$ of~$2$ from
	$t=\accSigTwoTimeStar$~s until $T_{\mathrm{tol}}$, with exactly two
	$\varepsilon$-clusters recorded; the T-tree does so from
	$t=\accSigTwoTimeTtree$~s, and the path stays within
	$\accSigNearExcessPath$ of~$2$ from $t=\accSigNearTwoTimePath$~s
	until $T_{\mathrm{tol}}$. A least-squares fit of $W$ over that
	interval on the star gives $-\dot w\approx\accWslopeStratumStar$~rad/s
	versus $c=\accC$~rad/s, consistent with conservatism in the
	subsequent Cauchy--Schwarz and uniform-gradient estimates.

	\begin{figure}[!t]
		\centering
		\includegraphics[width=\columnwidth]{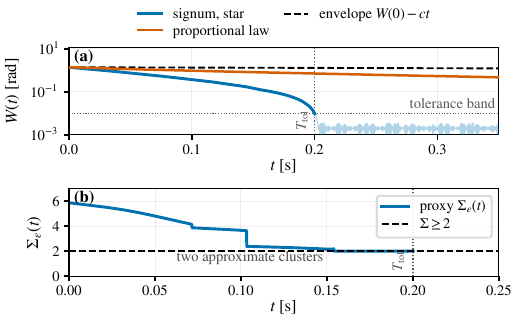}
		\caption{Phase~1 on the star at $\rho=0.6$ (Theorem~\ref{thm:main}(ii),
				Lemma~\ref{lem:tree_bound}). (a)~$W(t)$, log scale, with the
				envelope $W(0)-ct$, the tolerance band, and the proportional
				law (faint tail: chattering after $T_{\mathrm{tol}}$).
				(b)~Tolerance-based cluster-sum proxy
				$\Sigma_\varepsilon(t)$ against the constant~$2$.}
		\label{fig:phase1}
	\end{figure}

	\emph{Gain and radius sweeps.}
	Fig.~\ref{fig:sweeps}(a) varies $\alpha$ from $\accSweepAlphaMin$ to
	$\accSweepAlphaMax$ (star shown; the other trees differ by at most
	$\accSweepTreeSpread\%$). Every crossing above the sufficient
	threshold $\gamma nM/2=\accThr$ lies below $T_{\mathrm{bd}}(\alpha)$,
	which diverges at the threshold while $T_{\mathrm{tol}}$ decreases
	only from $\accSweepTtolLoStar$ to $\accSweepTtolHiStar$~s. Crossings
	also occur below that threshold, so the sufficient
	condition~\eqref{eq:gain} is conservative for the displayed
	instances.
	Fig.~\ref{fig:sweeps}(b) varies $\rho\in\{0.45,0.6,0.75,0.9,1.2,1.5\}$
	with the targets fixed and $\alpha/\gamma=1.4\,nM/2$ rescaled with
	$M$. On all three trees
	$T_{\mathrm{tol}}/T_{\mathrm{bd}}\leq\accRadRatioMaxAll$, and every
	recorded iterate remains in the prescribed ball.

	\emph{Random-tree trials.}
	At $\rho=0.6$ and $\gamma=0.5$ we also test ten labeled trees sampled
	by Pr\"ufer sequences with random seed~$2027$ for each $n\in\{5,8,12\}$,
	resampling targets and initial attitudes independently (tree $q$ of
	size $n$ uses seed $1000+10n+q$) and, for $n>5$, weights uniformly on
	$[0.8,1.2]$ rounded to $0.01$. With $\alpha/\gamma=1.4\,nM/2$ and a
	$1.2$~s horizon, $T_{\mathrm{tol}}/T_{\mathrm{bd}}$ ranges over
	$[\accRndRatioLoFive,\accRndRatioHiFive]$,
	$[\accRndRatioLoEight,\accRndRatioHiEight]$, and
	$[\accRndRatioLoTwelve,\accRndRatioHiTwelve]$, respectively. Because
	costs and initial conditions also vary, these ranges are not
	isolated tree-shape effects.

	\begin{figure}[!t]
		\centering
		\includegraphics[width=\columnwidth]{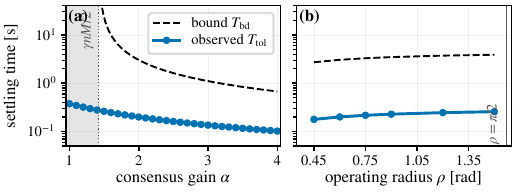}
		\caption{Settling time on the star versus (a)~gain $\alpha$ at
				$\rho=0.6$ and (b)~radius $\rho$ at $\alpha/\gamma=1.4\,nM/2$:
				observed $T_{\mathrm{tol}}$ (blue) and bound
				$T_{\mathrm{bd}}$ of~\eqref{eq:settle} (dashed). Shaded
				in~(a): the gain condition~\eqref{eq:gain} fails.}
		\label{fig:sweeps}
	\end{figure}

	\subsection{Phase 2: sliding flow and exponential rate}\label{sec:sim_p2}

	Fig.~\ref{fig:phase2}(a) compares $\max_id(R_i,R^*)$ on the star,
	with $R^*$ the weighted Karcher mean computed offline, against the
	guaranteed distance rate $\gamma\mu_F/n\approx\accRate$~s$^{-1}$
	of~\eqref{eq:rate}. The fitted decay rate on $[1,6]$~s is
	$\accRateFitStar$~s$^{-1}$ (Table~\ref{tab:summary}), consistent
	with conservatism of the global modulus $\mu_F$.
	For Fig.~\ref{fig:phase2}(b), each velocity is expressed in agent~$1$'s
	body frame as $\omega_i^{(1)}=R_1^\top R_i\omega_i$ and averaged over
	the nonoverlapping windows
	$[T_{\mathrm{tol}}+k\Delta,T_{\mathrm{tol}}+(k+1)\Delta)$, $k\geq1$,
	contained in $[T_{\mathrm{tol}},2]$~s (the first window after the
	crossing is discarded); the worst per-agent residual over these
	windows to the similarly averaged target
	$-\tfrac{\gamma}{n}\nabla F(R_1)$ decreases approximately as
	$\Delta^{-1}$, from $\accSlideNarrowOneAll$~rad/s at $\Delta=5$~ms to
	$\accSlideWideOneAll$~rad/s at $\Delta=0.5$~s, supporting the sliding
	identification~\eqref{eq:sliding} in a window-averaged numerical
	sense.

	\begin{figure}[!t]
		\centering
		\includegraphics[width=\columnwidth]{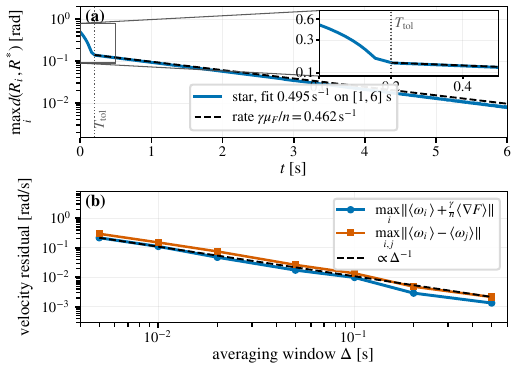}
		\caption{Phase~2 on the star (Theorem~\ref{thm:main}(iii)).
				(a)~Distance to the minimizer with $T_{\mathrm{tol}}$ and a
				reference line of slope $-\gamma\mu_F/n$ anchored at
				$T_{\mathrm{tol}}$; inset: $t\in[0,0.5]$~s.
				(b)~Worst-window per-agent residual to the sliding
				prediction~\eqref{eq:sliding} and pairwise mismatch versus
				window $\Delta$, against a $\Delta^{-1}$ decay.}
		\label{fig:phase2}
	\end{figure}

	\subsection{Invariance at the boundary}\label{sec:sim_inv}

	The base runs have minimum sampled boundary margin
	$\rho-\max_id(R_i,R_c)=\accMinMargStar$~rad (at $t=0$), so
	Fig.~\ref{fig:invariance} uses boundary initializations on the star
	with $h_{\mathrm{num}}=\accHb$~s. In panel~(a) all agents start on
	$\partial\ccalC$; their initial radial derivatives
	$\dot r_i(0)=-\hat e_{i,c}^\top\omega_i(0)=(\accRdotsB)$~rad/s are
	strictly negative, most negative at the hub in this run, and the
	minimum margin over the sampled times $t\geq h_{\mathrm{num}}$ is
	$\accMinMargB$~rad.
	In panel~(b) agents~$1$ (hub) and~$2$ start coincident on the
	boundary, a coincidence analogous to the configuration of
	Remark~\ref{rem:viab}.
	Under the nominal zero-edge selection agent~$2$ has initial radial
	derivative $\accLeafGrad$~rad/s, inward as Assumption~\ref{ass:rad}
	requires. In the Euler discretization, the initial velocity
	mismatch ($\accSepZero$~rad/s) opens the edge at the first step,
	after which the signum feedback restores approximate agreement.
	The minimum margin over $t\geq h_{\mathrm{num}}$ is
	$\accMinMargC$~rad; the tolerance crossings are $\accTtolB$
	and $\accTtolC$~s.

	\begin{figure}[!t]
		\centering
		\includegraphics[width=\columnwidth]{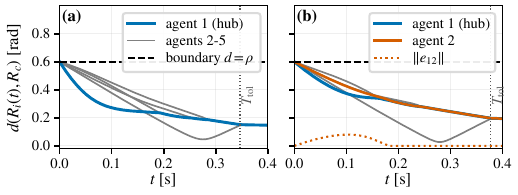}
		\caption{Invariance (Theorem~\ref{thm:main}(i)), star: per-agent radii
				$d(R_i(t),R_c)$ with the boundary $d=\rho$ (dashed). (a)~All
				agents on $\partial\ccalC$. (b)~Agents~$1$ (hub) and~$2$
				coincident on $\partial\ccalC$; dotted: $\|e_{12}(t)\|$; gray: agents $3$--$5$.}
		\label{fig:invariance}
	\end{figure}

	\begin{table}[!t]
		\caption{Theoretical references (ideal continuous-time solution)
				versus observed values (stated discretization and tolerances),
				base runs at $\rho=0.6$; times in s, rates in s$^{-1}$,
				residuals in rad/s.}
		\label{tab:summary}
		\centering
		\scriptsize
		\setlength{\tabcolsep}{2.5pt}\renewcommand{\arraystretch}{0.92}
		\resizebox{\columnwidth}{!}{%
			\begin{tabular}{@{}lcccc@{}}
				\hline
				Quantity & Theoretical reference & Star & Path & T-tree \\
				\hline
				Settling time ($T_{\mathrm{tol}}$ observed) & $T^*\leq T_{\mathrm{bd}}$ & $\accTtolStar$ & $\accTtolPath$ & $\accTtolTtree$ \\
				$\min_{W(t)>\mathrm{tol}}\Sigma_\varepsilon(t)$ & $\geq2$ & $\accSigminStar$ & $\accSigminPath$ & $\accSigminTtree$ \\
				Phase-2 rate, fit & $\gamma\mu_F/n\approx\accRate$ & $\accRateFitStar$ & $\accRateFitPath$ & $\accRateFitTtree$ \\
				sliding residual, $\Delta{=}0.5$~s & $0$ & $\accSlideWideOneStar$ & $\accSlideWideOnePath$ & $\accSlideWideOneTtree$ \\
				pairwise mismatch, $\Delta{=}0.5$~s & $0$ & $\accSlideWideTwoStar$ & $\accSlideWideTwoPath$ & $\accSlideWideTwoTtree$ \\
				\hline
		\end{tabular}}
	\end{table}

	\section{Conclusion}\label{sec:conclusion}

	We established a two-stage convergence result for an intrinsic
	signum--gradient protocol on $\SO(3)$ over undirected trees.
	Cluster aggregation yields exact finite-time consensus for every
	Filippov solution, after which the common attitude follows an
	aggregate Riemannian gradient flow and converges exponentially
	to the interior minimizer. A separate boundary condition ensures
	strong invariance of the operating ball and is satisfied by the
	weighted squared-distance costs considered here. The guarantees
	concern the ideal continuous-time protocol; general graphs and
	sampled-data implementations remain open directions.

	\balance
	\bibliographystyle{IEEEtran}
	\bibliography{references}

\end{document}